\documentclass [12pt,a4paper]{amsart}
\usepackage{amsfonts}
\usepackage{amsthm}
\usepackage{amsmath, pb-diagram}
\usepackage{amscd}
\usepackage[latin2]{inputenc}
\usepackage{t1enc}
\usepackage[mathscr]{eucal}
\usepackage{indentfirst}
\usepackage{graphicx}
\usepackage{graphics,epsfig}
\usepackage{pict2e}
\usepackage{epic}
\numberwithin{equation}{section}
\usepackage[margin=2.9cm]{geometry}
\usepackage{epstopdf}
\usepackage{xfrac}
\usepackage{float}
\usepackage[shortlabels]{enumitem}
\usepackage{mathtools}
\usepackage{amsmath,gauss,tikz}
\usepackage{lscape}
\usepackage{array}
\usepackage{setspace}
\usepackage{enumitem}

\DeclareMathOperator{\cat}{cat}

\allowdisplaybreaks

%\makeatletter
%\def\thm@space@setup{\thm@preskip=2pt
%\thm@postskip=2pt}
%\makeatother

\theoremstyle{plain}
\newtheorem{theo}{Theorem}[section]
\newtheorem{cor}[theo]{Corollary}

\newtheorem{prop}[theo]{Proposition}

\theoremstyle{definition}
\newtheorem{defn}[theo]{Definition}

\newtheorem{rem}[theo]{Remark}

\begin{document}
	
\title{On the LS-category and topological complexity of projective product spaces}

\author{Seher F\.{I}\c{S}EKC\.{I}}
\address{Ege University, Izmir, Turkey}
\author{Lucile Vandembroucq}
\address{Centro de Matem\'atica, Universidade do Minho, Campus de Gualtar, 4710-057 Braga, Portugal.
	%{\rm{\texttt{\href{mailto:lucile@math.uminho.pt}{lucile@math.uminho.pt}}}}}
}
\thanks{This work has been realized during a stay of the first author at the Centre of Mathematics of the University of Minho during the period October 2019-July 2020. The first author is granted by a fellowship by the Scientific and Technological Research Council of Turkey TUBITAK-2211/A and supported  by the Scientific and Technological Research Council of Turkey International Doctoral Research Fellowship Programme TUBITAK-2214/A (Project Number:1059B141801086). The research of the second author was partially supported by Portuguese Funds through FCT -- Funda\c c\~ao para a Ci\^encia e a Tecnologia, within the Project UID/MAT/00013/2020.}

\date{\today}
\maketitle 

\begin{abstract}
	We determine the Lusternik-Schnirelmann category of the projective product spaces introduced by D. Davis. We also obtained an upper bound for the topological complexity of these spaces, which improves the estimate given by J. González, M. Grant, E. Torres-Giese, and M. Xicoténcatl.
\end{abstract}

\section{Introduction}
We let $\bar{n}$ symbolize an $r$-tuple $(n_1 , \ldots , n_r)$ of positive integers with $n_1 \leq \ldots \leq n_r$. We consider the product $S_{\bar{n}}: = S^{n_1} \times \cdots \times S^{n_r}$ and, given $x_i \in S^{n_i}$, we write $\bar{x} = (x_1, \ldots, x_r)$ for the corresponding element of $S_{\bar{n}}$. The quotient space $$P_{\bar{n}}: = S_{\bar{n}}/(\bar x \sim-\bar x)=(S^{n_1} \times \cdots \times S^{n_r}) / (({x_1, \ldots, x_r}) \sim ({- x_1, \ldots, - x_r}))$$ with respect to the diagonal action of $\mathbb{Z}_2$ on $S_{\bar{n}}$ has been introduced by D. Davis \cite{Davis} and is called \textit{projective product space}. This is a manifold of dimension $dim(P_{\bar{n}}) = dim(S_{\bar{n}}) = \sum n_i$ and, when $r = 1$, the space $P_{\bar{n}}$ coincides with the usual real projective space $P^{n_1}$.
Recently, the notion of projective product space was used in \cite{CGGGL} and generalized in \cite{SZ}.  

In this note, we study the (normalized) Lusternik-Schnirelmann category (cat) and Farber's topological complexity (TC) of the space $P_{\bar n}$. This study has been initiated by J. González, M. Grant, E. Torres-Giese, M. Xicoténcatl in \cite{GGTX}, where the following result is established:

\begin{theo}\cite[Theorem 3.8]{GGTX} \label{GGTX-mainThm} Let $k$ represent the number of spheres $S^{n_q}$ with $n_q$ even and $q>1$. Then $TC(P_{\bar{n}}) < (TC(P^{n_1}) + 1)(r + k)$.
\end{theo}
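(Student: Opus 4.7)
My plan is to realize $P_{\bar n}$ as the total space of a fiber bundle over $P^{n_1}$, and then combine a motion planner on the base $P^{n_1}$ with a motion planner on the fiber. The projection $\pi \colon P_{\bar n} \to P^{n_1}$ defined by $[(x_1,\ldots,x_r)] \mapsto [x_1]$ is well defined because the $\mathbb{Z}_2$-action sends $x_1$ to $-x_1$. Over any open set $U \subset P^{n_1}$ evenly covered by $S^{n_1} \to P^{n_1}$, a choice of lift $U \to S^{n_1}$ produces a trivialization $\pi^{-1}(U) \cong U \times F$, where $F = S^{n_2} \times \cdots \times S^{n_r}$. Hence $\pi$ is a locally trivial fiber bundle with fiber $F$.

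To control the TC of the fiber, I would invoke the product inequality $TC(X \times Y) \leq TC(X) + TC(Y)$ together with the classical values $TC(S^{n})=1$ for $n$ odd and $TC(S^{n})=2$ for $n$ even. Among the $r-1$ sphere factors of $F$, exactly $k$ are even-dimensional by the definition of $k$, so
\[
TC(F) \leq (r-1-k)\cdot 1 + k\cdot 2 = r-1+k,
\]
i.e.\ $F\times F$ admits a cover by $r+k$ open sets each carrying a continuous local section of the path fibration of $F$.

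The core step is to upgrade these two motion planners to a motion planner on $P_{\bar n}$. I would take an optimal cover $V_0,\ldots,V_{TC(P^{n_1})}$ of $P^{n_1}\times P^{n_1}$ with continuous sections $\sigma_j\colon V_j\to (P^{n_1})^I$, pull each $V_j$ back along $\pi\times\pi$, and then, on each refined open piece that sits inside a trivializing rectangle for $\pi\times\pi$, splice the lift of $\sigma_j$ with a chosen local motion planner on $F\times F$ via the local product structure. Counting pieces gives
\[
TC(P_{\bar n})+1 \leq (TC(P^{n_1})+1)(TC(F)+1) \leq (TC(P^{n_1})+1)(r+k),
\]
from which the strict inequality of the statement is immediate.

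The main technical obstacle is continuity of the combined lift: the trace of a typical path $\sigma_j(v)$ need not lie inside a single trivializing neighborhood of $\pi$, so one must subdivide the unit interval into finitely many subintervals on each of which the bundle trivializes along the lifted path, and concatenate the resulting fiber-wise lifts continuously in $v$. A compactness argument, using local triviality of $\pi$ and compactness of $I$, lets this subdivision be performed after shrinking each $V_j$, and a partition-of-unity or open-refinement argument shows that the shrinking can be absorbed into the same $TC(P^{n_1})+1$ pieces. This is the delicate point; once carried out, the combined cover has cardinality exactly $(TC(P^{n_1})+1)(r+k)$, as required.
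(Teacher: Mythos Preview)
The present paper does not give its own proof of this statement: Theorem~\ref{GGTX-mainThm} is quoted verbatim from \cite{GGTX} as background, and the paper's contribution is the sharper additive bound of Theorem~\ref{theo-TC}. So there is no ``paper's own proof'' to compare against; I will assess your argument on its own terms and against what \cite{GGTX} actually does.

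Your strategy of viewing $P_{\bar n}$ as an $F$-bundle over $P^{n_1}$ with $F=S^{n_2}\times\cdots\times S^{n_r}$ is exactly the starting point of \cite{GGTX}, and the arithmetic $TC(F)\le r-1+k$ is fine. The gap is in the ``core step''. A product-type inequality
\[
TC(E)+1\ \le\ (TC(B)+1)(TC(F)+1)
\]
for a fiber bundle $F\to E\to B$ is \emph{false} in general: the Klein bottle, viewed as an $S^1$-bundle over $S^1$, has $TC(K)=4$ while $(TC(S^1)+1)^2=4$, contradicting $TC(K)+1\le 4$. So the scheme ``lift the base path, then run a motion planner in the fibre'' cannot work without an extra ingredient. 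Concretely, after you horizontally lift $\sigma_j$ you land at a point $e_1'$ in the fibre over $\pi(e_2)$; to decide which piece $W_i\subset F\times F$ the pair $(e_1',e_2)$ belongs to, and to apply $\tau_i$, you must identify that fibre with $F$, and this identification is only defined up to the structure group. Unless the cover $\{W_i\}$ and the sections $\tau_i$ are invariant/equivariant under the structure group, the resulting decomposition of $E\times E$ into pieces indexed by $(j,i)$ is not well defined, and you are forced to refine further by trivializing charts, destroying the count.

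For the bundle at hand the structure group is $\mathbb Z_2$, acting on $F$ by the diagonal antipodal map, and the standard sphere motion planners \emph{are} $\mathbb Z_2$-equivariant (geodesics satisfy $\lambda(-x,-y)=-\lambda(x,y)$, the odd-sphere vector field is odd, etc.). This is precisely the mechanism used in \cite{GGTX} (phrased there via equivariant/``effective'' TC of the $\mathbb Z_2$-space $S_{\bar n}$). Your write-up never invokes it; instead you flag interval subdivision as ``the delicate point'', but that is a non-issue: once $\pi$ is a fibration, a continuous lifting function $E\times_B B^I\to E^I$ exists outright, with no subdivision needed. The genuinely delicate point is the equivariance of the fibre motion planner, and until you supply it your count of $(TC(P^{n_1})+1)(r+k)$ pieces is not justified.
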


In this paper, we first determine the LS-category of $P_{\bar n}$: 

%of Gonz{\'{a}}lez et al. suggests the general problem of estimating the topological complexity and related invariants of the projective product space $P_{\bar{n}}: = S^{n_1} \times, \ldots, \times S^{n_r} / ({\bar{x}} \sim -{\bar{x}})$ in terms of the projective space $P^{n_1}$ associated to the first sphere and the number of remaining spheres. We completely solve this question for the Lusternik-Schnirelmann category.

\begin{theo} \label{theo-cat} $\text{cat}(P_{\bar{n}})=\text{cat}(P^{n_1})+r-1=n_1+r-1$.
\end{theo}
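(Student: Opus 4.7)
My strategy is to establish matching bounds: a cup-length lower bound from the mod-$2$ cohomology of $P_{\bar n}$, and an inductive upper bound using the sphere-bundle structure
$$S^{n_r} \longrightarrow P_{(n_1,\ldots,n_r)} \longrightarrow P_{(n_1,\ldots,n_{r-1})}.$$
The base case $r=1$ is the classical equality $\text{cat}(P^{n_1})=n_1$.

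\textbf{Lower bound.} I will use the fibration $F\to P_{\bar n}\xrightarrow{\pi}P^{n_1}$ with fiber $F=S^{n_2}\times\cdots\times S^{n_r}$. With $\mathbb Z_2$-coefficients, the antipodal monodromy acts trivially on $H^\ast(F;\mathbb Z_2)$ (the antipode on $S^{n_q}$ has degree $\pm 1$, hence acts as the identity mod $2$), and the classes $\beta_q\in H^{n_q}(S^{n_q};\mathbb Z_2)$ extend to the total space via the obvious coordinate projections $S_{\bar n}\to S^{n_q}$ (which descend after forming $\mathbb Z_2$-invariant lifts). So the Leray--Hirsch theorem gives an $H^\ast(P^{n_1};\mathbb Z_2)$-module isomorphism
$$H^\ast(P_{\bar n};\mathbb Z_2)\;\cong\;\mathbb Z_2[\alpha]/(\alpha^{n_1+1})\otimes\Lambda(\beta_2,\ldots,\beta_r),$$
where $\alpha=\pi^\ast(w_1)\in H^1(P_{\bar n};\mathbb Z_2)$. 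The monomial $\alpha^{n_1}\beta_2\cdots\beta_r$ corresponds to the top element of the tensor product, hence is nonzero. As it is a cup product of $n_1+(r-1)=n_1+r-1$ classes in positive degree, the cup-length bound yields $\text{cat}(P_{\bar n})\geq n_1+r-1$.

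\textbf{Upper bound (induction on $r$).} Assume $\text{cat}(P_{(n_1,\ldots,n_{r-1})})\leq n_1+r-2$. The projection that forgets the last coordinate realises $P_{\bar n}$ as the unit sphere bundle of the rank $(n_r+1)$ vector bundle $\xi=(n_r+1)L$ over $P_{(n_1,\ldots,n_{r-1})}$, where $L$ is the orientation line bundle of the double cover $S_{\bar n'}\to P_{\bar n'}$. The plan is to invoke the inequality $\text{cat}(S(\xi))\leq\text{cat}(B)+1$ valid for any sphere bundle over a CW-base, which would give $\text{cat}(P_{\bar n})\leq(n_1+r-2)+1=n_1+r-1$, completing the induction. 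To prove this inequality concretely in our setting, I would start from a categorical cover $V_0,\ldots,V_c$ of $P_{\bar n'}$ by open sets contractible in $P_{\bar n'}$ (so the pulled-back bundle is trivial over each $V_i$), choose local sections $s_i\colon V_i\to P_{\bar n}$ over each $V_i$, and build $c+2$ categorical open sets of $P_{\bar n}$ by taking, for each $i$, the complement in $\pi^{-1}(V_i)$ of an ``antipode section" $-s_i$ (each such set deformation retracts onto $s_i(V_i)$, which inherits a null-homotopy in $P_{\bar n}$ from that of $V_i$ in $P_{\bar n'}$), together with one extra open set covering the union of the removed antipode sections.

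\textbf{Main obstacle.} The lower bound is essentially mechanical once Leray--Hirsch is secured. The genuine work is in the upper bound: producing the $c+2$ open sets requires handling overlaps of the local sections $s_i$ and the antipode loci, and one has to argue carefully that each piece, although only homotopy equivalent to the contractible sets $V_i\subset P_{\bar n'}$, remains null-homotopic in the larger space $P_{\bar n}$. Equivalently, one must verify that the sphere bundle admits a ``fibrewise categorical'' cover in the sense needed to upgrade the general Ganea bound $(\text{cat}(B)+1)(\text{cat}(F)+1)-1$ to the sharp additive bound $\text{cat}(B)+\text{cat}(F)$ for this family of bundles.
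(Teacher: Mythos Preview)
Your lower bound is fine and matches the paper's: both use the mod-$2$ cup-length, and the paper simply cites Davis' computation of $H^*(P_{\bar n};\mathbb{Z}_2)$ rather than rederiving it via Leray--Hirsch.

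The upper bound, however, has a genuine gap. The inequality $\text{cat}(S(\xi))\leq \text{cat}(B)+1$ that you invoke is \emph{false} for general sphere bundles: take $\xi=TS^2$, so that $S(\xi)\cong SO(3)\cong \mathbb{R}P^3$ with $\text{cat}(\mathbb{R}P^3)=3>1+1=\text{cat}(S^2)+1$. Your sketched construction reproduces exactly this failure. You can indeed choose each section $s_i$ so that $s_i(V_i)$ is null-homotopic in $E$ (lift the null-homotopy of $V_i$ backward from a point of the fibre), and then each $\pi^{-1}(V_i)\setminus(-s_i)(V_i)$ is categorical; that gives $c+1$ sets. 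The problem is the ``one extra open set covering the union of the removed antipode sections'': on the overlaps $V_i\cap V_j$ the local sections $-s_i$ and $-s_j$ need not agree, and their union can carry an essential cycle in $E$. In the $SO(3)$ example with $V_0,V_1$ the two hemispherical disks, this union is not categorical --- if it were, you would get $\text{cat}(\mathbb{R}P^3)\leq 2$. You correctly flag this as the ``main obstacle,'' but nothing in your proposal uses any feature of the particular bundle $(n_r+1)L\to P_{\bar n'}$ that would rule such behaviour out; as written, the inductive step is not established.

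The paper avoids this issue entirely by abandoning the inductive sphere-bundle picture. It works directly on $S_{\bar n}=S^{n_1}\times\cdots\times S^{n_r}$, builds an explicit filtration $\emptyset=W_{-1}\subset W_0\subset\cdots\subset W_{n_1+r-1}=S_{\bar n}$ by combining a coordinate-hyperplane filtration of $S^{n_1}$ (with $n_1+1$ strata) and a two-step filtration of each $S^{n_q}$ ($q\ge 2$), and then writes down explicit geodesic/meridian contractions of each stratum to a single point. The crucial point is that these contractions are visibly equivariant for the \emph{diagonal} antipodal action, so they descend to null-homotopies in $P_{\bar n}$. This global construction is what replaces the delicate coherence-of-sections issue that your approach runs into.
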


This is obtained through the construction of an explicit categorical cover together with the knowledge of the  cohomology of $P_{\bar n}$ over ${\mathbb Z}_2$, which has been determined by Davis \cite[Theorem 2.1]{Davis} (see Theorem \ref{Davis-cohomology} below).\\

For the topological complexity, we establish:

\begin{theo} \label{theo-TC}
	$\text{TC}(P_{\bar{n}}) \leq \text{TC}(P^{n_1}) + \sum_{q=2}^{r}\text{TC}(S^{n_q}).$
\end{theo}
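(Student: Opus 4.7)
My plan is to adapt the proof of the product inequality $\text{TC}(X \times Y) \leq \text{TC}(X) + \text{TC}(Y)$ to the twisted product structure of $P_{\bar n} = (S^{n_1} \times \cdots \times S^{n_r})/\mathbb{Z}_2$: I will combine an optimal motion planner on $P^{n_1}$ with $\mathbb{Z}_2$-equivariant optimal motion planners on each $S^{n_q}$ for $q \geq 2$, and then apply the standard ``diagonal grouping'' to keep the count of sections additive.

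Set $m := \text{TC}(P^{n_1})$ and $k_q := \text{TC}(S^{n_q})$. First I will fix local sections $f_i \colon U_i \to (P^{n_1})^I$, $i = 0, \ldots, m$, realizing $m$, and for each $q \geq 2$ local sections $g_q^{j_q} \colon V_q^{j_q} \to (S^{n_q})^I$, $j_q = 0, \ldots, k_q$, realizing $k_q$. The sphere motion planners will be chosen $\mathbb{Z}_2$-equivariant for the diagonal antipodal action on $S^{n_q} \times S^{n_q}$: each $V_q^{j_q}$ invariant under $(x,y) \mapsto (-x,-y)$ and each section satisfying $g_q^{j_q}(-x,-y) = -g_q^{j_q}(x,y)$. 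Given $([\bar x], [\bar y]) \in P_{\bar n} \times P_{\bar n}$ with $([x_1], [y_1]) \in U_i$, I will lift $f_i([x_1], [y_1])$ to a path $\tilde f_i$ in $S^{n_1}$ starting at a chosen representative $x_1$; its endpoint has the form $\epsilon y_1$ for some $\epsilon \in \{\pm 1\}$. For each tuple $(i, j_2, \ldots, j_r)$ I will define
\[
W_{i; j_2, \ldots, j_r} = \{([\bar x], [\bar y]) : ([x_1], [y_1]) \in U_i \text{ and } (x_q, \epsilon y_q) \in V_q^{j_q} \text{ for all } q \geq 2\},
\]
equipped with the section $s([\bar x], [\bar y])(t) = [\tilde f_i(t), g_2^{j_2}(x_2, \epsilon y_2)(t), \ldots, g_r^{j_r}(x_r, \epsilon y_r)(t)]$ in $P_{\bar n}$. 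A direct check, using the invariance of the $V_q^{j_q}$ and the equivariance of the $g_q^{j_q}$ (both of $\epsilon$ and of $x_q$ change sign under $\bar x \mapsto -\bar x$, while $\epsilon y_q$ is invariant under $\bar y \mapsto -\bar y$), will show that $W_{i; j_2, \ldots, j_r}$ and $s$ are independent of the chosen representatives.

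The resulting open cover of $P_{\bar n} \times P_{\bar n}$ has $(m+1) \prod_q(k_q+1)$ sets, too many for the claimed bound. To obtain the additive count I will apply the standard ENR refinement that replaces each $U_i$ and each $V_q^{j_q}$ by a pairwise disjoint collection of locally compact pieces of the same total count, and then group the resulting $W_{i; j_2, \ldots, j_r}$ by the index sum $l = i + j_2 + \cdots + j_r \in \{0, 1, \ldots, m + \sum_q k_q\}$. Within each group the pieces are pairwise disjoint (because distinct tuples differ in at least one coordinate, where the refined sets are disjoint), so the individual sections assemble into a single section on the union. This produces $m + \sum_q k_q + 1$ sections covering $P_{\bar n} \times P_{\bar n}$ and yields the stated bound.

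The main technical obstacle will be the equivariance requirement on the sphere motion planners. The geodesic section on $\{y \neq -x\} \subset S^{n_q} \times S^{n_q}$ is automatically antipode-equivariant, so only the remaining sections need attention. For $n_q$ odd, one extra equivariant section suffices, built from the antipode-equivariant nonvanishing vector field $v(z) = iz$ on $S^{n_q} \subset \mathbb{C}^{(n_q+1)/2}$; for $n_q$ even, two extra equivariant sections are required, built from two antipode-equivariant vector fields with antipodally symmetric and mutually disjoint zero sets. Once these equivariant optimal motion planners are produced, the rest of the proof is a direct combination of the construction above with the reshuffling argument.
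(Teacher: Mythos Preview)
Your approach is essentially the paper's: both combine a motion planner on $P^{n_1}$ with $\mathbb{Z}_2$-equivariant motion planners on the spheres $S^{n_q}$ ($q\geq 2$), using a sign $\epsilon$ extracted from the $P^{n_1}$-part to synchronize the sphere motions, and then apply the diagonal grouping. The paper obtains $\epsilon$ as the sign of a component of the Farber--Tabachnikov--Yuzvinsky nonsingular map realizing $\text{TC}(P^{n_1})$; your path-lifting produces the same information more abstractly.

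There is, however, a subtle gap in your grouping step. You only require the sphere domains $V_q^{j_q}$ to be invariant under the diagonal action $(x,y)\mapsto(-x,-y)$. Because of this, your refined pieces are not products: the condition $(x_q,\epsilon_i y_q)\in G_q^{j_q}$ depends on $\epsilon_i$, which depends on the section $f_i$. Within a fixed level $s$, two pieces with different first index $i\neq i'$ need not be topologically separated---a sequence in one can converge to a point of the other when $\epsilon_i$ and $\epsilon_{i'}$ disagree near the boundary of $F_i$---so the individual sections need not glue to a continuous section on the union. The paper avoids this by taking sphere pieces $G_{j_q}$ invariant under the full $\mathbb{Z}_2\times\mathbb{Z}_2$-action (in particular under $(x,y)\mapsto(x,-y)$), e.g.\ $G_0=\{y\neq\pm x\}$ rather than $\{y\neq -x\}$. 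Then $(x_q,\epsilon y_q)\in G_{j_q}$ is independent of $\epsilon$, the refined pieces become genuine products $F_i\times\prod_q G_{j_q}$ in $S_{\bar n}\times S_{\bar n}$, and the usual product argument yields a nested open filtration with topologically disjoint differences. Your proof is easily repaired by imposing this stronger invariance (your explicit equivariant sphere planners adapt with no change in the count), or alternatively by invoking a general result that arbitrary covers with local sections suffice for metric ANR spaces.
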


In terms of the number $k$ of spheres $S^{n_q}$ with $n_q$ even and $q>1$, the upper bound of Theorem \ref{theo-TC} can be written $ \text{TC}(P^{n_1}) + r+k-1$, which permits us to see that Theorem \ref{theo-TC} improves Theorem \ref{GGTX-mainThm}. As mentionned in \cite{GGTX}, using Davis' description of $H^*(P_{\bar n}; {\mathbb Z}_2))$, the zero-divisor-cup-length of $P_{\bar n}$ over $\mathbb{Z}_2$ can be expressed as $\text{zcl}_{{\mathbb Z}_2}(P^{n_1})+r-1$. We then obtain:
	
 \begin{cor} \label{cor-TC} If $\textrm{zcl}_{{\mathbb Z}_2}(P^{n_1})=\text{TC}(P^{n_1})$ and $n_q$ is odd for $q>1$ then $\textrm{TC}(P_{\bar{n}}) = \textrm{TC}(P^{n_1}) + \sum_{q=2}^{r}\textrm{TC}(S^{n_q})= \textrm{TC}(P^{n_1})+r-1.$
 \end{cor}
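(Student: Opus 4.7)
The plan is to prove the corollary by squeezing $\text{TC}(P_{\bar n})$ between a matching upper and lower bound. The upper bound is supplied directly by Theorem \ref{theo-TC}, so the main work is to identify a lower bound of the same value, and then to observe that the two sums on the right of the corollary coincide under the odd-dimensionality hypothesis.

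First I would reduce the sum $\sum_{q=2}^{r}\text{TC}(S^{n_q})$. Under the assumption that $n_q$ is odd for every $q>1$, each sphere $S^{n_q}$ is an $H$-space (or at least has TC equal to $1$ by the classical result of Farber), so each term equals $1$ and the sum telescopes to $r-1$. This immediately establishes the second equality in the statement, and it also turns the bound from Theorem \ref{theo-TC} into $\text{TC}(P_{\bar n}) \leq \text{TC}(P^{n_1}) + r-1$.

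Next I would produce the matching lower bound via the zero-divisors cup-length. The standard inequality gives $\text{TC}(P_{\bar n}) \geq \text{zcl}_{{\mathbb Z}_2}(P_{\bar n})$, and the paper has already recorded (from the explicit description of $H^*(P_{\bar n};{\mathbb Z}_2)$ in \cite{Davis}) the identity $\text{zcl}_{{\mathbb Z}_2}(P_{\bar n}) = \text{zcl}_{{\mathbb Z}_2}(P^{n_1}) + r - 1$. Combining with the hypothesis $\text{zcl}_{{\mathbb Z}_2}(P^{n_1}) = \text{TC}(P^{n_1})$ yields
\[
\text{TC}(P_{\bar n}) \;\geq\; \text{TC}(P^{n_1}) + r - 1.
\]
Putting the two bounds together forces equality throughout, which gives the corollary.

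I do not expect any real obstacle here, since both ingredients (the upper bound of Theorem \ref{theo-TC} and the zero-divisor computation derived from Davis' cohomology theorem) are already in hand; the only thing to be careful about is citing the zcl expression correctly and recalling the precise normalization so that $\text{TC}(S^{n_q})=1$ for $n_q$ odd matches the $r-1$ on the right-hand side.
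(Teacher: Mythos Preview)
Your proposal is correct and matches the paper's approach exactly: the corollary is obtained by squeezing $\text{TC}(P_{\bar n})$ between the upper bound of Theorem~\ref{theo-TC} (which becomes $\text{TC}(P^{n_1})+r-1$ when all $n_q$ with $q>1$ are odd) and the lower bound coming from $\text{zcl}_{{\mathbb Z}_2}(P_{\bar n})=\text{zcl}_{{\mathbb Z}_2}(P^{n_1})+r-1$ together with the hypothesis $\text{zcl}_{{\mathbb Z}_2}(P^{n_1})=\text{TC}(P^{n_1})$.
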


The upper bound in Theorem \ref{theo-TC} is obtained through the construction of an explicit motion planner which uses the characterization of $\text{TC}(P^{n_1})$ in terms of non-singular maps due to M. Farber, S. Yuzvinsky and S. Tabachnikov \cite{FTY}. We note that, using a strong version of non-singular maps, an explicit motion planner for polyhedral products of real projective spaces has recently been constructed in \cite{AGO}.

\section{Lusternik-Schnirelmann category of $P_{\bar n}$}

As $P_{\bar n}$ is a finite, path-connected CW complex, we give all the useful definitions and characterizations for such spaces.

\begin{defn} Let $X$ be a finite, path-connected CW complex. A subset $A \subset X$ is called categorical if the inclusion $i: A \hookrightarrow X$ is nullhomotopic. The Lusternik-Schnirelman category $cat(X)$ of $X$ is defined as the least integer $k$ that admits a cover of $X$ by $k + 1$ open categorical subsets $U_0, \ldots , U_k \subset X$. 
\end{defn}

Let $R$ be a commutative unitary ring. Recall that the cup-length over $R$ of a path-connected space $X$, ${\rm cuplength}(X) = {\rm cuplength}_R(X)$, is the longest length $k$ of a nonzero product $c_1 \smile \cdots \smile c_k \neq 0$ of cohomology classes $c_1, \cdots, c_k \in H^+(X; R)$ and provides a lower bound for the Lusternik-Schnirelmann category of $X$, ${\rm cuplength}(X) \leq cat(X)$.

As is well-known, the cup-length over $\mathbb{Z}_2$ of the $m$-dimensional projective space $P^m$ is equal to $m$. The ${\mathbb Z}_2$ cohomology of $P_{\bar n}$ has been determined by Davis:  

\begin{theo}\label{Davis-cohomology} (\cite[Theorem 2.1]{Davis}) Let $\bar{n} = (n_1 , \ldots , n_r)$ such that $n_1 \leq \ldots \leq n_r$. If $n_1 < n_2$ or $n_1$ is odd, the mod 2 cohomology ring of $P_{\bar{n}}$ is given by $$H^*(P_{\bar{n}};\mathbb{Z}_2) = H^*(P^{n_1};\mathbb{Z}_2) \otimes \Lambda[a_{2},\ldots,a_{r}]$$ where $dim(a_1)=1$, $dim(a_i)=n_i$ for $i>1$, and $\Lambda$ denotes an exterior algebra. If $n_1$ is even and $n_1=\cdots=n_k<n_{k+1}$ for some $k>1$, $H^*(P_{\bar{n}};\mathbb{Z}_2)$ is the same as above with the extra relation given by  $a_{i}^2=a^{n_1}a_{i}$ for $2\leq i\leq k$.
\end{theo}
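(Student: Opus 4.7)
The plan is to apply the Serre spectral sequence to the Borel fibration $S_{\bar n}\to P_{\bar n}\to B\mathbb{Z}_2=\mathbb{R}P^\infty$ associated with the free diagonal action, so that $P_{\bar n}\simeq S_{\bar n}\times_{\mathbb{Z}_2} E\mathbb{Z}_2$. With $\mathbb{Z}_2$ coefficients the antipodal map on $S^n$ acts as the identity on cohomology (since $(-1)^{n+1}\equiv 1\pmod 2$), so the local system is trivial and
\[
E_2 = \mathbb{Z}_2[a_1]\otimes \Lambda[b_1,\ldots,b_r],\qquad |a_1|=1,\ |b_i|=n_i.
\]

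Next I would pin down the transgressions by naturality. The projection $\pi\colon P_{\bar n}\to P^{n_1}$, $[x_1,\ldots,x_r]\mapsto[x_1]$, is well-defined and covers the identity of $\mathbb{R}P^\infty$; comparison with the Serre spectral sequence of $S^{n_1}\to P^{n_1}\to \mathbb{R}P^\infty$ forces $d_{n_1+1}(b_1)=a_1^{n_1+1}$, so $a_1^{n_i+1}$ is already a boundary on $E_{n_1+2}$ for every $i\geq 2$. When $n_1<n_2$, this leaves no room for further nontrivial transgressions and the sequence collapses to $\mathbb{Z}_2[a_1]/a_1^{n_1+1}\otimes \Lambda[b_2,\ldots,b_r]$; ruling out multiplicative extensions of $b_i^2$ then proceeds by induction on $r$, using naturality with respect to the forgetful projections $P_{(n_1,\ldots,n_r)}\to P_{(n_1,\ldots,\widehat{n_j},\ldots,n_r)}$ to reduce to the known cases. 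When $n_1$ is odd (allowing $n_1=\cdots=n_k$), the surviving degree-$n_1$ classes are represented by $b_i-b_1$ for $i\leq k$, and a Wu/Steenrod-square computation --- using that $\mathrm{Sq}^{n_1}$ on a degree-$n_1$ class equals its square and that the parity of $n_1$ kills the relevant mod $2$ binomial coefficient --- forces $a_i^2=0$.

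The principal obstacle is the remaining case $n_1$ even with $n_1=\cdots=n_k<n_{k+1}$, where the statement requires the extra relation $a_i^2=a^{n_1}a_i$. This is a genuine multiplicative extension that is invisible at $E_\infty$, since one still has $(b_i-b_1)^2=0$ there. I would detect it by computing $\mathrm{Sq}^{n_1}(a_i)$ via the Cartan formula together with naturality along $\pi$ and the known Steenrod action on $H^*(\mathbb{R}P^\infty;\mathbb{Z}_2)$; the resulting class $a^{n_1}a_i$ is forced to be nonzero by mod $2$ Poincar\'e duality (applied to the closed manifold $P_{\bar n}$, which forces the top class $a_1^{n_1}a_2\cdots a_r$ to survive), and the identity $a_i^2=\mathrm{Sq}^{n_1}(a_i)$ then yields the stated relation. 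This Steenrod-square computation, where the parity of $n_1$ enters essentially through a binomial coefficient, is the technical heart of the argument.
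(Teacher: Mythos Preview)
The paper does not contain a proof of this statement: Theorem~\ref{Davis-cohomology} is quoted verbatim from Davis \cite[Theorem~2.1]{Davis} and used as input for the cup-length lower bound (Proposition~\ref{cuplength}), with no argument supplied here. So there is nothing in the present paper to compare your proposal against.

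That said, your outline is the natural one and is essentially the route taken in Davis's original paper: run the Serre spectral sequence of the fiber bundle over $P^{n_1}$ (equivalently, the Borel fibration you wrote down), use triviality of the $\mathbb{Z}_2$-coefficient system, identify the transgression of the bottom fiber class, and then resolve the multiplicative extensions via Steenrod squares. Two small points are worth tightening. First, your ``forgetful projections'' $P_{(n_1,\ldots,n_r)}\to P_{(n_1,\ldots,\widehat{n_j},\ldots,n_r)}$ are not maps of spaces for $j\geq 2$ in any obvious way (the diagonal $\mathbb{Z}_2$-action does not split); what you do have are the inclusions $P_{(n_1,\ldots,\widehat{n_j},\ldots,n_r)}\hookrightarrow P_{\bar n}$ coming from basepoint inclusions $\{*\}\hookrightarrow S^{n_j}$, and these suffice for the naturality arguments you need. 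Second, in the even case the key computation is $\mathrm{Sq}^{n_1}(a_i)=a_i^2$ together with the Wu formula / Cartan-type calculation that forces $\mathrm{Sq}^{n_1}(a_i)=a_1^{n_1}a_i$; you have identified the mechanism correctly, but the actual verification (which in Davis's treatment goes through the Steenrod action on the $E_2$-page and the known squares in $H^*(P^{n_1};\mathbb{Z}_2)$) deserves to be written out rather than asserted.
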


By considering  the longest nonzero product $a_1^{n_1} \smile a_2\cdots \smile a_r \neq 0$ in $H^*(P_{\bar n};{\mathbb Z}_2)$ we obtain:

\begin{prop}\label{cuplength} ${\rm cuplength}_{{\mathbb Z}_2}(P_{\bar n})={\rm cuplength}_{{\mathbb Z}_2}(P^{n_1})+r-1=n_1+r-1$.
	\end{prop}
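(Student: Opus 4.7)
The plan is to bound $\mathrm{cuplength}_{\mathbb{Z}_2}(P_{\bar n})$ from above and below by $n_1+r-1$, using Davis' description in Theorem \ref{Davis-cohomology}; the second identity $\mathrm{cuplength}_{\mathbb{Z}_2}(P^{n_1})=n_1$ is standard and has just been recalled.

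For the lower bound, Davis' theorem provides a $\mathbb{Z}_2$-basis of $H^*(P_{\bar n};\mathbb{Z}_2)$ consisting of the \emph{normal-form monomials} $a_1^{e}a_{i_1}\cdots a_{i_s}$ with $0\le e\le n_1$ and $2\le i_1<\cdots<i_s\le r$ (in both cases of that theorem). In particular $a_1^{n_1}\smile a_2\smile\cdots\smile a_r$ is a basis element (lying in top degree $\sum n_i$) and is therefore nonzero; as a cup product of $n_1+r-1$ positive-degree classes it yields $\mathrm{cuplength}_{\mathbb{Z}_2}(P_{\bar n})\ge n_1+r-1$.

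For the upper bound I would introduce a weight function on the basis by $w(a_1^{e}a_{i_1}\cdots a_{i_s}):=e+s$, and extend it to $c\in H^*(P_{\bar n};\mathbb{Z}_2)$ by taking the minimum weight over normal-form summands of $c$ with nonzero coefficient. Every $c\in H^+$ has $w(c)\ge 1$ and every basis element has weight at most $n_1+r-1$. It then suffices to prove the multiplicative inequality $w(c_1c_2)\ge w(c_1)+w(c_2)$ whenever $c_1c_2\ne 0$, because iterating it forces any nonzero $n$-fold cup product in $H^+$ to satisfy $n\le n_1+r-1$.

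The multiplicative inequality reduces, via $\mathbb{Z}_2$-bilinear expansion and collection of terms, to analysing the product $m_1m_2$ of two normal-form monomials $m_1=a_1^{e}a_I$ and $m_2=a_1^{f}a_J$. Using Davis' relations $a_1^{n_1+1}=0$, $a_j^2=0$ for $j>k$, and $a_i^2=a_1^{n_1}a_i$ for $2\le i\le k$, a short case analysis on $I\cap J$ shows that $m_1m_2$ is either zero or a single normal-form monomial of weight at least $w(m_1)+w(m_2)$: when $I\cap J=\emptyset$ equality holds, while in the sole surviving ``collision'' case (a single shared index in $\{2,\ldots,k\}$ with $e=f=0$) the weight exceeds $w(m_1)+w(m_2)$ by exactly $n_1-1\ge 1$. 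The main delicate point is precisely this last case, coming from case 2 of Theorem \ref{Davis-cohomology}: one must verify that the promotion rule $a_i^2=a_1^{n_1}a_i$ always raises the weight sufficiently, which uses that $n_1$ is even and hence $\ge 2$. In case 1 the whole argument simplifies to the standard identity $\mathrm{cl}(A\otimes B)=\mathrm{cl}(A)+\mathrm{cl}(B)$ for graded connected $\mathbb{Z}_2$-algebras, but the weight-function approach handles both cases uniformly.
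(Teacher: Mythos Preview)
Your argument is correct and follows the paper's approach: the lower bound via the explicit nonzero product $a_1^{n_1}\smile a_2\smile\cdots\smile a_r$ is exactly what the paper records. The paper leaves the matching upper bound implicit in Davis' description of $H^*(P_{\bar n};\mathbb{Z}_2)$, whereas you spell it out with a weight-function argument that cleanly handles the extra relation $a_i^2=a_1^{n_1}a_i$ in the second case of Theorem~\ref{Davis-cohomology}; this is a useful elaboration rather than a genuinely different route.
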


In order to prove Theorem \ref{theo-cat}, we will construct an explicit cover of $P_{\bar n}$ with contractible subsets using the following characterization of the category:

\begin{prop}\cite[Lemma 1.35]{CLOT}\label{Caracterization-cat} Suppose that $X$ is a path-connected finite CW-complex. We have $\text{cat}(X)\leq k$ if and only if there exists an increasing sequence of open sets
	\begin{displaymath}
	\emptyset  = U_{-1}\subset U_0 \subset U_1 \subset \ldots \subset U_k = X 
	\end{displaymath}
	such that, for any $i\in \{0,\cdots,k\}$, $U_i -U_{i-1}$ is contractible in $X$.
\end{prop}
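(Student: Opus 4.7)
The plan is to prove each implication separately. For the forward direction ($\Rightarrow$), if $\text{cat}(X)\leq k$ is witnessed by a categorical open cover $A_0,\ldots,A_k$, I would set $U_{-1}=\emptyset$ and $U_i=A_0\cup\cdots\cup A_i$. Each $U_i$ is open, $U_k=X$, and $U_i-U_{i-1}\subset A_i$ is contractible in $X$ by restricting to it the null-homotopy of $A_i$.

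For the reverse direction ($\Leftarrow$), suppose the sequence $\emptyset=U_{-1}\subset U_0\subset\cdots\subset U_k=X$ is given. I would set $F_i:=U_i-U_{i-1}=U_i\cap(X\setminus U_{i-1})$, which is closed in the open set $U_i$, and observe that the $F_i$'s partition $X$. The plan is to thicken each $F_i$ to an open set $W_i\subset U_i$ that is contractible in $X$: the $W_i$'s will then cover $X$ and form a categorical cover of size $k+1$, yielding $\text{cat}(X)\leq k$.

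To produce $W_i$, I would use that a finite CW complex is an ANR (Hanner's theorem), hence locally contractible and admitting neighborhood extension of homotopies defined on closed subsets. Let $H\colon F_i\times I\to X$ be the given null-homotopy with $H_0=$ inclusion and $H_1\equiv *$. Combining $H$ with the inclusion $U_i\hookrightarrow X$ on $U_i\times\{0\}$ yields a map on the closed subset $(F_i\times I)\cup(U_i\times\{0\})$ of $U_i\times I$; by the ANR extension property and the tube lemma applied to the compact $I$, this map extends to $\widetilde H\colon V\times I\to X$ for some open neighborhood $V$ of $F_i$ in $U_i$. Since $\widetilde H_1$ need only satisfy $\widetilde H_1|_{F_i}=*$ (not $\widetilde H_1\equiv *$), I would next pick a contractible open neighborhood $U$ of $*$ in $X$ and set $W_i:=\widetilde H_1^{-1}(U)$. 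Then $F_i\subset W_i$ is open in $X$, and $\widetilde H|_{W_i\times I}$ homotopes the inclusion $W_i\hookrightarrow X$ to a map factoring through the contractible subset $U$; hence $W_i$ is contractible in $X$, as required.

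The main obstacle I expect is this thickening step: extending the null-homotopy off $F_i$ and then modifying it so that its time-$1$ map lands in a contractible set. This is where the ANR property of CW complexes and local contractibility are essential, and where the bookkeeping between the open $U_i$'s, the closed $F_i$'s, and the extension neighborhoods $V$ and $W_i$ needs to be carried out with care. The remainder of the argument is formal.
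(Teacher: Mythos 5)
Your proof is correct and carries out precisely the adaptation the paper alludes to in the remark following Proposition~\ref{Caracterization-cat}: the paper does not write out a proof but cites \cite[Lemma 1.35]{CLOT} (whose hypothesis requires each $F_i$ to lie in an open set contractible in $X$) and notes that the weaker hypothesis used here is obtained by ``adapting the proof of \cite[Prop.\ 4.12]{Far2}'' using the ENR/ANR property of finite CW complexes. Your thickening argument --- extending the null-homotopy off the closed set $F_i\times I\cup U_i\times\{0\}$ via the ANR neighborhood-extension property, shrinking by the tube lemma, and then pulling back a contractible neighborhood of the basepoint under $\widetilde H_1$ to fix that $\widetilde H_1$ is only constant on $F_i$ --- is exactly that adaptation, done carefully and correctly.
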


\begin{rem} The sets $F_i=U_i -U_{i-1}$ provide a cover of $X$ by $k+1$ disjoint subsets which are contractible in $X$. We note that, in \cite[Lemma 1.35]{CLOT}, $X$ is not supposed to be a finite CW-complex but it is required that each $F_i=U_i -U_{i-1}$ is contained in an open set of $X$ which is contractible in $X$. Assuming that $X$ is a finite CW-complex (and therefore an ENR space) and adapting the proof of \cite[Prop. 4.12]{Far2}, we can just ask that $F_i$ is contractible in $X$. Actually, when $X$ is a metric ANR space, we know by \cite{Srinivasan} (see also \cite{Calcines}), that $\cat(X)\leq k$ if and only there exists a cover by $k+1$ subsets contractible in $X$ without further condition on the subsets.  
\end{rem}

%\begin{theo} \label{theo-cat} Let $P_{\bar{n}} := S^{n_1} \times \cdots \times S^{n_r} / (\bar{x} \sim -\bar{x})$ be Davis' projective product space where $\bar{n} = (n_1a_{j_0} , \ldots , n_r)$ and $n_1 \leq \ldots \leq n_r$, then the Lusternik-Schnirelman category of $P_{\bar{n}}$, $\text{cat}(P_{\bar{n}})$, is equal to $n_1 + r - 1$.
%\end{theo}
\begin{proof}[Proof of Theorem \ref{theo-cat}] 
	We first fix some general notation. Writing $$S^{m}=\{(u_0 , \ldots , u_k, \ldots, u_{m}) \in S^m \subset \mathbb{R}^{m+1}~|~ \sum u_k^2=1\},$$ we denote by $p_j:S^{m}\to {\mathbb R}$ the obvious projection ($j=0,\cdots,m$) and by $a_j$ the unique point of $S^m$ such that $p_j(a_j)=1$. Note that, for any $x\in S^m$, $p_j(-x)=-p_j(x)$. When $j=m$, we will use the special notation $A_m:=a_m=(0,\cdots,0,1)$ and we fix a meridian path $\mu_0(A_m,-A_m):I\to S^m$ such that $\mu_0(0)=A_m$ and $\mu_0(1)=-A_m$. We will denote by $\mu_0(-A_m,A_m)$ the path given by $\mu_0(-A_m,A_m)(t)=-\mu_0(A_m,-A_m)(t)$, that is $\mu_0(-A_m,A_m)=-\mu_0(A_m,-A_m)$.
	For non-antipodal points $A,B$ $(A \neq -B)$ of $S^m$, let $\lambda(A,B): I \to S^m$ be the geodesic path from $A$ to $B$. Note that $\lambda(-A , -B) = -\lambda(A,B)$ and that $\lambda(A,A)$ is the constant path.\\

We now define a cover of $S^{n_1}$ which induces a cover of $P^{n_1}$ by categorical subsets. In our constructions we use a formalism which was inspired by \cite{CP}.

For a subset $L \subset \{ 0,1, \ldots , n_1 \}$, let $|L|$ denote the cardinality of $L$ and consider
\begin{displaymath}
S_L = \left\{ x \in S^{n_1} \text{ : } p_l(x) \neq 0 \text{ if } l \in L \right\}.
\end{displaymath}
By setting $$U_{i} = \bigcup_{|L| = (n_1+1) - i} S_L$$ for $0 \leq i \leq n_1$ and $U_{-1} = \emptyset$, we have an increasing sequence of open subsets of $S^{n_1}$:
\begin{displaymath}
\emptyset  = U_{-1} \subset U_0 \subset \ldots \subset U_{n_1} = S^{n_1}
\end{displaymath}
Note that $U_{n_1}=S^{n_1}$ since the projections $p_0$,...,$p_{n_1}$ cannot vanish all at the same time.

Considering, for $L \subset \{ 0,1, \ldots , n_1 \}$,
\begin{displaymath}
Q_L  = \left\{ x \in S^{n_1} \text{ : } p_l(x) \neq 0 \text{ if } l \in L \text{ and } p_l(x) = 0 \text{ if } l \notin L \right\}
\end{displaymath}
we have, for $0 \leq i \leq n_1$, $$F_{i}:=U_{i} - U_{i-1} =  \bigcup_{|L| = (n_1+1) - i} Q_L.$$
Note that all the sets $S_L$, $Q_L$, $U_i$, $F_i$ are saturated with respect to the antipodal relation $x\sim -x$ on $S^{n_1}$.
Note also that, for $L_1 = L_2$ with same cardinality $|L_1| = |L_2|$ the sets $Q_{L_1}$ and $Q_{L_2}$ are topologically disjoint and that $\bigcup_{i=0} ^{n_1} F_{i}$ is a cover of $S^{n_1}$.

For $L \subset \{ 0,1, \ldots , n_1 \}$, we consider the map $\psi_L: Q_L \to (P^{n_1})^I$
\begin{displaymath}
\psi_L (x) = \begin{cases} 
[\lambda(x,a_{l_0})], &\text{ if } p_{l_0} (x) > 0, \\
[\lambda(x,-a_{l_0})], &\text{ if } p_{l_0} (x) < 0
\end{cases}.
\end{displaymath}
where $l_0 = \text{min }L$. The map is continuous on $Q_L$ and satisfies $\psi_L(x)(0)=[x]$, $\psi_L(x)(1)=[a_{l_0}]$, and $\psi_L(x)=\psi_L(-x)$. This shows that the subset $Q_L/\sim$ is contractible in $P^{n_1}$. As the sets $Q_L$ with $|L|$ constant are topologically disjoints, we obtain that, for $0 \leq i \leq n_1$, $F_i/\!\sim$ is a union of topologically disjoint subsets which are contractible in $P^{n_1}$. Since $P^{n_1}$ is path-connected, we can conclude that $F_i/\!\sim$ is itself contractible in $P^{n_1}$.

For $2 \leq q \leq r$, we consider the increasing sequence of open sets
\[V_{-1}=\emptyset \quad V_0=\{x\in S^{n_q}~|~x\neq \pm A_{n_q}\} \quad V_1=S^{n_q}\]
as well as the sets $G_0:=V_0-V_{-1}=V_0$ and $G_1:=V_1-V_0=\{\pm A_{n_q}\}$.
These sets are saturated with respect to the antipodal relation.

Let $L \subset\{ 0,1, \ldots , n_1 \}$ with  $l_0 = \text{min }L$ and, for $2 \leq q \leq r$, let $j_q \in \{ 0,1 \}$. We define 
\[\psi_{(L,j_2 , \ldots , j_r)} : Q_L \times \Pi_{q=2} ^r G_{j_q} \to P_{\bar n}^I\]
by
\begin{displaymath}
\psi_{(L,j_2 , \ldots , j_r)} (x_1,x_2,\ldots ,x_r) = \begin{cases}
[\lambda(x_1,a_{l_0}) , y_2 , \ldots , y_r], &  \text{ if } p_{l_0} (x_1) > 0 \\
[\lambda(x_1,-a_{l_0}) , z_2 , \ldots , z_r], &  \text{ if } p_{l_0} (x_1) < 0
\end{cases}
\end{displaymath}
where

$y_q = \begin{cases}
\mu_0(-A_{n_q},A_{n_q}), & \text{ if } x_q = -A_{n_q} \\
\lambda (x_q , A_{n_q}), & \text{ if } x_q \neq -A_{n_q} \\
\end{cases}$

and 

$z_q = \begin{cases}
\mu_0(A_{n_q},-A_{n_q}), & \text{ if } x_q = A_{n_q} \\
\lambda (x_q , -A_{n_q}), & \text{ if } x_q \neq A_{n_q} \\
\end{cases}$

This map is continuous, well-defined on $Q_L \times \Pi_{q=2} ^r G_{j_q} $ and satisfies
\[\psi_{(L,j_2 , \ldots , j_r)} (\bar x)(0)=[\bar x] \quad 
\psi_{(L,j_2 , \ldots , j_r)} (\bar x)(0)=[a_{l_0},A_{n_2},\ldots ,A_{n_r}]\]
Moreover, we can check that the map is compatible with the diagonal antipodal relation and hence permits us to see that $(Q_L \times \Pi_{q=2} ^r G_{j_q})/\sim $ is a subset contractible in $P_{\bar n}$.

For $i \in \{ 0,1, \ldots , n_1 \}$ and $j_q \in \{0,1\}$ for each $2 \leq q \leq r$, we built 
\begin{center}
	$W_s = \bigcup_{i+\sum_{q=2}^r j_q =s} U_i \times \prod_{q=2}^r  V_{j_q} \subset S^{n_1} \times S^{n_2} \ldots \times S^{n_r}	$
\end{center}
where $s=0, \ldots ,n_1+r-1$. Therefore, there exists a tower of open subsets
\begin{displaymath}
\emptyset = W_{-1} \subset W_0 \subset \ldots \subset W_{n_1+r-1} = S^{n_1} \times S^{n_2} \ldots \times S^{n_r}
\end{displaymath}
We have
\begin{displaymath}
W_{s} - W_{s-1} = \bigcup_{i+\sum_{q=2}^r j_q =s}(U_{i}-U_{i-1}) \times \prod_{q=2}^r (V_{j_q} - V_{j_q-1}) = \bigcup_{i+\sum_{q=2}^r j_q =s}F_i \times \prod_{q=2}^r G_{j_q}
\end{displaymath}
which is a topologically disjoint union and each $F_i \times \prod_{q=2}^r G_{j_q}$ is itself a topologically disjoint union of $(Q_L \times \Pi_{q=2} ^r G_{j_q})/\sim $ with $|L|$ constant. As already mentioned, all the subsets are saturated with respect to the diagonal antipodal relation. Passing to the quotient we get 
a tower of open subsets of $P_{\bar n}$ 
\begin{displaymath}
\emptyset = \tilde{W}_{-1} \subset \tilde{W}_0 \subset \ldots \subset \tilde{W}_{n_1+r-1)} =  P_{\bar n}
\end{displaymath}
and 
\begin{displaymath}
\tilde{W}_{s} - \tilde{W}_{s-1}
% = \bigcup_{i+\sum_{q=2}^r j_q =l} \dfrac{U_{i}-U_{i-1} \times \prod_{q=2}^r V_{j_q} - V_{j_q-1}}{\sim} 
= \bigcup_{i+\sum_{q=2}^r j_q =s} \dfrac {F_i \times \prod_{q=2}^r G_{j_q}} {\sim}
\end{displaymath}
is a topologically disjoint union. As $({F_i \times \prod_{q=2}^r G_{j_q}})/\!{\sim}$ is itself a topologically disjoint union of $(Q_L \times \Pi_{q=2} ^r G_{j_q})/\!\sim $ and as these spaces are contractible in $P_{\bar n}$, which is path-connected, we can conclude that each $\tilde{W}_{s} - \tilde{W}_{s-1}$ is contractible in $P_{\bar n}$. Therefore by Proposition \ref{Caracterization-cat}, we obtain that  
$\text{cat}(P_{\bar{n}}) \leq n_1+r-1$. By Proposition \ref{cuplength}, we conclude that $\text{cat}(P_{\bar{n}}) = n_1+r-1$.
\end{proof}

\section{Topological complexity of $P_{\bar n}$}

\begin{defn}  Let $X$ be a finite, path-connected CW complex. The (normalized) topolological complexity of $X$, $TC(X)$, is the least integer $k$ such that there exists a cover of $X \times X$ by $k + 1$ open subsets $U_0, U_1, . . . , U_k \subset X \times X$ on each of which the fibration 
\begin{center}
$ev_{0,1}: X^I \rightarrow X \times X$, $\gamma \mapsto (\gamma(0), \gamma(1))$,
\end{center}
 admits a continuous section. 
 %If no such integer $k$ exists then we put $TC(X) = \infty$. 
\end{defn}

%The main theorem in \cite{GGTX} gives an upper bound of the topological complexity of the projective product space $P_{\bar{n}}$ in terms of $TC(P^{n_1})$ and the number of even dimensional spheres:
%
%\begin{theo}\cite[Theorem 3.8]{GGTX} \label{GGTX-mainThm} Let $k$ represents the number of spheres $S^{n_i}$ with $n_i$ even and $i > 0$, then $TC(P_{\bar{n}}) < (TC(P^{n_1}) + 1)(r + k)$.
%\end{theo}
%
%As for any path-connected and paracompact space $X$ we have the general inequality $TC(X) \leq 2cat(X)$  \cite{Far}, we deduce from Theorem \ref{theo-cat} the following upper bound of the topological complexity of the projective product space $P_{\bar{n}}$:
%\begin{cor} $TC(P_{\bar{n}}) \leq 2(n_1 + r - 1)$.  
%\end{cor}

%As shown in \cite{FTY}, the topological complexity of the $P^m$ satisfies $TC(P^m)\geq m+1$ except when $m=1,3$ or $7$ in which case we have $TC(P^m)=m$. This permits us to see that, expect in very few cases, the number $2(n_1 + r - 1)$ is always less than or equal to the upper bound provided by Theorem \ref{GGTX-mainThm}. The only exceptions are, with the notation of Theorem \ref{GGTX-mainThm}, when $(n_1,k)=(1,0)$, $(n_1,k,r)=(3,0,2)$ and $(n_1,k,r)=(7,0,2)$.

%In this section, we prove the following theorem:
%
%\begin{theo} \label{theo-TC} If $P_{\bar{n}} := S^{n_1} \times \cdots \times S^{n_r} / (\bar{x} \sim -\bar{x})$ is Davis' projective product space where $\bar{n} = (n_1 , \ldots , n_r)$ and $n_1 \leq \ldots \leq n_r$, then we have 
%	\[\text{TC}(P_{\bar{n}}) \leq \text{TC}(P^{n_1}) + \sum_{q=2}^{r}\text{TC}(S^{n_q}).\]
%\end{theo} 

In order to establish Theorem \ref{theo-TC}, we will use the following characterization:

\begin{prop}\cite[Proposition 4.12]{Far2} \label{Caracterization-TC}  Let $X$ be a finite, path-connected CW complex. We have $\text{TC}(X)\leq k$ if and only if there exists  $s:X\times X \to X^I$ and an increasing sequence of open sets
\begin{displaymath}
\emptyset  = U_{-1}\subset U_0 \subset U_1 \subset \ldots \subset U_k = X \times X
\end{displaymath}
such that $ev_{0,1}\circ s=id$ and, for any $i\in \{0,\cdots,k\}$, $s$ is continuous on $U_i -U_{i-1}$.
\end{prop}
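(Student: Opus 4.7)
The plan is to prove both implications of the claimed equivalence. The forward direction is essentially a relabelling of the definition of $\text{TC}(X)$, while the backward direction requires a fibered extension argument that uses the ENR structure of $X \times X$.

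Forward direction ($\Rightarrow$): Suppose $\text{TC}(X)\leq k$ and choose an open cover $V_0,\ldots,V_k$ of $X\times X$ together with continuous local sections $\sigma_i\colon V_i\to X^I$ of $ev_{0,1}$. Set $U_{-1}:=\emptyset$ and $U_i:=V_0\cup\cdots\cup V_i$ for $0\leq i\leq k$, producing an increasing chain of open sets with $U_k=X\times X$. Define $s\colon X\times X\to X^I$ pointwise by $s(x,y):=\sigma_i(x,y)$, where $i$ is the smallest index such that $(x,y)\in V_i$, equivalently the unique $i$ with $(x,y)\in F_i:=U_i-U_{i-1}$. Since each $\sigma_i$ is a section we have $ev_{0,1}\circ s=\mathrm{id}$, and on $F_i$ the map $s$ coincides with $\sigma_i|_{F_i}$, hence is continuous there.

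Backward direction ($\Leftarrow$): Assume that $s$ and the sequence $(U_i)$ are given. The strategy is to promote each continuous restriction $s|_{F_i}$ to a continuous section on a genuine open neighborhood $V_i$ of $F_i$ in $X\times X$; since the $F_i$ partition $X\times X$, the resulting $V_i$ will form an open cover of size $k+1$ with continuous local sections of $ev_{0,1}$, yielding $\text{TC}(X)\leq k$.

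The crux of the argument, and the step I expect to be the main obstacle, is this extension. Since $X$ is a finite CW complex, $X\times X$ is a metric ENR and $ev_{0,1}\colon X^I\to X\times X$ is a Hurewicz fibration. Observe that $F_i$ is closed in the open subset $U_i$ (because $U_{i-1}$ is open), so $F_i$ sits as a closed subspace of the ENR $U_i$. Combining ANR-extension for the target $X^I$ with the homotopy lifting property of $ev_{0,1}$, I would first extend $s|_{F_i}$ to a continuous map $\tilde s_i$ defined on some open neighborhood $V_i'$ of $F_i$ in $U_i$, then use the homotopy $H_t(x,y):=\bigl((1-t)\,ev_{0,1}\tilde s_i(x,y)+t\,(x,y)\bigr)$ (formally: a homotopy in $X\times X$ from $ev_{0,1}\circ\tilde s_i$ to the inclusion that is constant on $F_i$, obtained via the ENR retraction from a Euclidean neighborhood) to lift and modify $\tilde s_i$, shrinking $V_i'$ to a smaller open neighborhood $V_i\subset V_i'$ of $F_i$ on which the resulting map is a genuine section. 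Taking $V_0,\ldots,V_k$ produces the required open cover and concludes the proof.
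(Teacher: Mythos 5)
Note first that the paper does not give its own proof of this proposition: it is quoted verbatim from Farber's book (\cite[Proposition 4.12]{Far2}) and used as a black box. So there is no in-paper argument to compare against; the right benchmark is the standard proof in the ENR/ANR setting, which is exactly what you reproduce.

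Your argument is correct and is essentially the standard one. The forward implication is, as you say, just a relabelling via $U_i = V_0 \cup \cdots \cup V_i$ and piecing together the local sections on the differences $F_i$. For the backward implication, the two ingredients you invoke are the right ones: (i) $F_i$ is relatively closed in the open ENR $U_i$, and since $X^I$ is an ANR (a path space of a compact ANR is an ANR), the restriction $s|_{F_i}$ extends to a continuous map on an open neighbourhood $V_i'$ of $F_i$; (ii) $ev_{0,1}$ is a Hurewicz fibration, and, using an ENR embedding $X\times X \subset \mathbb{R}^N$ with retraction $r\colon W \to X\times X$, the straight-line homotopy composed with $r$ gives a homotopy from $ev_{0,1}\circ \tilde s_i$ to the identity on a possibly smaller open neighbourhood $V_i$ of $F_i$ (open because it contains the set where the segment stays in $W$, which contains $F_i$); lifting this homotopy starting from $\tilde s_i$ and taking the time-one map produces a genuine continuous section on $V_i$. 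Since the $V_i$ cover $X\times X$, this gives $\text{TC}(X)\leq k$. You flagged the correct subtlety (the convex combination is only formal and must be rerouted through the ENR retraction), and there is no gap in the sketch.
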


\begin{rem} The sets $F_i=U_i -U_{i-1}$ provide a cover of $X\times X$ by $k+1$ disjoint subsets on each of which there is a continuous section of $ev_{0,1}$. This defines a motion planner for $X$.
\end{rem}

\begin{proof}[Proof of Theorem \ref{theo-TC}]
	Through the obvious homeomorphism we think of  $P_{\bar{n}} \times P_{\bar{n}}=S_{\bar{n}} \times S_{\bar{n}} / \sim$ as the quotient of $(S^{n_1}\times S^{n_1})\times \cdots \times (S^{n_r}\times S^{n_r})$ with respect to the relation
	
\begin{equation}\label{relation}
(x_1,y_1,\dots,x_r,y_r)\sim (x'_1,y'_1,\dots,x'_r,y'_r) \Leftrightarrow \left\{\begin{array}{rcc} 
\forall i& x_i=x'_i \text{ and } y_i=y'_i \\
\text{or } \forall i& x_i=-x'_i \text{ and } y_i=y'_i \\
\text{or } \forall i& x_i=x'_i \text{ and } y_i=-y'_i \\
\text{or } \forall i& x_i=-x'_i \text{ and } y_i=-y'_i \\
\end{array}\right.
\end{equation}	
	
We first recall from \cite{FTY} and \cite{Far} the construction of motion planners for the real projective space $P^{n_1}$ and for a sphere $S^{n_q}$. We will next see how to assemble them in order to obtain a motion planner for $P_{\bar n}$.

	Suppose that $\text{TC}(P^{n_1})=k$. As proven in \cite{FTY}, there exists a non-singular map $f=(f_0,\cdots,f_k) : \mathbb{R}^{n_1 + 1} \times \mathbb{R}^{n_1 + 1} \to \mathbb{R}^{k+1}$. The $k+1$ scalar maps $f_0 , f_1 , \ldots , f_{k}: \mathbb{R}^{n_1 + 1} \times \mathbb{R}^{n_1 + 1} \to \mathbb{R}$ satisfy $f_i(ax_1,by_1)=abf_i(x_1,y_1)$ for $(x_1,y_1) \in S^{n_1} \times S^{n_1}$ and $a,b \in \mathbb{R}$ and do not vanish all at the same time (except in $(0,0)$). Moreover, according to \cite[Lemma 11]{FTY}, we can assume that $f_0(x_1,x_1)>0$ for any $x_1 \in S^{n_1}$.
	
For a subset $L \subset \{ 0,1, \ldots , k \}$, let
\begin{displaymath}
S_L = \left\{ (x_1,y_1) \in S^{n_1} \times S^{n_1} \text{ : } f_l(x_1,y_1) \neq 0 \text{ if } l \in L \right\}.
\end{displaymath}
 
By setting $$U_{i} = \bigcup_{|L| = (k+1) - i} S_L$$ for $0 \leq i \leq k$ and $U_{-1} = \emptyset$, we have an increasing sequence of open subsets of $S^{n_1} \times S^{n_1}$:
\begin{displaymath}
\emptyset  = U_{-1} \subset U_0 \subset \ldots \subset U_k = S^{n_1} \times S^{n_1}
\end{displaymath}
Note that $U_k=S^{n_1}\times S^{n_1}$ since the $k+1$ scalar functions $f_0$,...,$f_k$ cannot vanish all at the same time.

Considering, for $L \subset \{ 0,1, \ldots , k \}$,
\begin{displaymath}
Q_L = \left\{ (x_1,y_1) \in S^{n_1} \times S^{n_1} \text{ : } f_l(x_1,y_1) \neq 0 \text{ if } l \in L \text{ and } f_l(x_1,y_1) = 0 \text{ if } l \notin L \right\}
\end{displaymath}
we have, for $0 \leq i \leq k$, $$F_{i}:=U_{i} - U_{i-1} =  \bigcup_{|L| = (k+1) - i} Q_L .$$
Note that all the sets $S_L$, $Q_L$, $U_i$, $F_i$ are saturated with respect to the equivalence relation on $S^{n_1}\times S^{n_1}$ induced by the antipodal relation $x_1\sim -x_1$ on $S^{n_1}$.
Note also that, for $L_1 = L_2$ with same cardinality $|L_1| = |L_2|$ the sets $S_{L_1}$ and $S_{L_2}$ are topologically disjoint and that $\bigcup_{i=0} ^{k} F_i$ is a cover of $S^{n_1} \times S^{n_1}$.

As before, for non-antipodal points $A,B$ $(A \neq -B)$ of a sphere $S^m$, we denote by $\lambda(A,B): I \to S^m$ the geodesic path from $A,B$. Notice that $\lambda(-A,-B) = -\lambda(A,B)$ and that $\lambda(A,A)$ is the constant path.

For $L \subset \{ 0,1, \ldots , k \}$, we consider the map $\psi_L: Q_L \to (P^{n_1})^I$
\begin{displaymath}
\psi_L (x_1,y_1) = \begin{cases} 
[\lambda(x_1,y_1)], &\text{ if } f_{l_0} (x_1,y_1) > 0, \\
[\lambda(-x_1,y_1)], &\text{ if } f_{l_0} (x_1,y_1) < 0
\end{cases}.
\end{displaymath}
where $l_0 = \text{min }L$. Recall that, for any $x_1\in S^{n_1}$, $f_0(x_1,x_1)=f_0(-x_1,-x_1>0$ and, consequently $f_0(x_1,-x_1)=f_0(-x_1,x_1)<0$. Therefore, if $(\pm x_1,\pm x_1)\in Q_L$, then $l_0 = \text{min }L=0$. This ensures that $\psi_L$ is well-defined on pairs of antipodal points. 
The map is continuous on $Q_L$ and satisfies $\psi_L(x_1,y_1)=\psi_L(\pm x_1,\pm y_1)$. As the sets $Q_L$ with $|L|$ constant are topologically disjoints, we obtain, for $0 \leq i \leq k$, a continuous map $\psi_i : F_i \to (P^{n_1})^I$ by setting $\psi_i | Q_L = \psi_L$. This map satisfies $\psi_i(x_1,y_1)=\psi_i(\pm x_1,\pm y_1)$ and the induced map $\bar{\psi}_i : F_i /\!\sim \,\to (P^{n_1})^I$ gives us an explicit motion planner on $P^{n_1}$ which essentially corresponds to the one described in \cite{FTY}.

For $2 \leq q \leq r$, we will use the following increasing sequence of open subsets of $S^{n_q} \times S^{n_q}$ together with the associated complements:

When $n_q$ is odd, we consider:
\begin{flalign*}
V_{-1} &= \emptyset \\
V_0 &= \left\{ (x_q, y_q) \in S^{n_q} \times S^{n_q} \text{ : } y_q \neq \pm x_q \right\} \\
V_1 &= S^{n_q} \times S^{n_q}. 
\end{flalign*}
and
\begin{flalign*}
G_0 &= V_0 - V_{-1} =  \left\{ (x_q, y_q) \in S^{n_q} \times S^{n_q} \text{ : } y_q \neq \pm x_q \right\} \\
G_1 &= V_1 - V_0 = \left\{ (x_q, y_q) \in S^{n_q} \times S^{n_q} \text{ : } y_q = \pm x_q \right\}.
\end{flalign*}

When $n_q$ is even, we consider:
\begin{flalign*}
V_{-1} &= \emptyset \\
V_0 &= \left\{ (x_q, y_q) \in S^{n_q} \times S^{n_q} \text{ : } y_q \neq \pm x_q \right\} \\
V_1 &= S^{n_q} \times S^{n_q} \setminus \left\{ (x_q, y_q) \in S^{n_q} \times S^{n_q} \text{ : } y_q = \pm x_q \text{ and } x_q = \pm A_{n_q} \right\} \\
V_2 &= S^{n_q} \times S^{n_q}.
\end{flalign*}
and
\begin{flalign*}
G_0 &= V_0 - V_{-1} =  \left\{ (x_q, y_q) \in S^{n_q} \times S^{n_q} \text{ : } y_q \neq \pm x_q \right\} \\
G_1 &= V_1 - V_0 =  \left\{ (x_q, y_q) \in S^{n_q} \times S^{n_q} \text{ : } y_q = \pm x_q , x_q \neq \pm A_{n_q} \right\}.\\
G_2& = V_2 - V_1 =  \left\{ (x_q, y_q) \in S^{n_q} \times S^{n_q} \text{ : } y_q = \pm x_q , x_q = \pm A_{n_q} \right\}.
\end{flalign*}
Here, as before, $A_{n_q}=(0,\cdots,0,1)\in S^{n_q}$.

Recall that the classical motion planner for a sphere can be given as follows:
\begin{itemize}
	\item For $(x_q, y_q) \in G_0$ and for $(x_q,x_q)\in G_1 \cup G_2$, we consider the geodesic path $\lambda(x_q, y_q)$.
\item For $(x_q , -x_q) \in G_1$, we consider the geodesic meridian $\mu(x_q , -x_q)$ from $x_q$ to $-x_q$ in the direction of $\chi(x_q)$. Here $\chi$ is the symmetric tangent vector field on $S^{n_q}\subset {\mathbb R}^{n_q+1}$ given by $\chi(u_1,v_1,\cdots, u_m,v_m)=(-v_1,u_1,\cdots,-v_m,u_m)$ if $n_q=2m-1$ is odd and by $\chi(u_1,v_1,\cdots, u_m,v_m,u_{m+1})=(-v_1,u_1,\cdots,-v_m,u_m,0)$ if $n_q=2m$ is even. Note that  $\mu(-x_q , x_q)=-\mu(x_q,-x_q)$. 
\item For $(A_{n_q},-A_{n_q})$, we fix a meridian $\mu_0(A_{n_q} , -A_{n_q})$ from $A_{n_q}$ to $-A_{n_q}$ and we set $\mu_0(-A_{n_q},A_{n_q}) = -\mu_0 (A_{n_q} , -A_{n_q})$.
\end{itemize}

We first assemble these motion planners in the following way.\\

Let $L \subset\{ 0,1, \ldots , k \}$ with  $l_0 = \text{min }L$ and, for $2 \leq q \leq r$, let $j_q \in \{ 0,1 \}$ when $n_q$ is odd or $j_q \in \{ 0,1,2 \}$ when $n_q$ is even. We define 
\[\psi_{(L,j_2 , \ldots , j_r)} : Q_L \times \Pi_{q=2} ^r G_{j_q} \to P_{\bar n}^I\]
by
\begin{displaymath}
\psi_{(L,j_2 , \ldots , j_r)} (x_1,y_1,x_2 , y_2 , \ldots , x_r , y_r) = \begin{cases}
[\lambda(x_1,y_1) , z_2 , \ldots , z_r], & f_{l_0} (x_1,y_1) > 0 \\
[\lambda(-x_1,y_1) , z'_2 , \ldots , z'_r], & f_{l_0} (x_1,y_1) < 0
\end{cases}
\end{displaymath}
where, for $n_q$ odd,

$z_q = \begin{cases}
\mu(x_q, y_q), & \text{ if } y_q= -x_q \\
\lambda (x_q, y_q), & \text{ otherwise } (y_q\neq \pm x_q \text{ or } y_q= x_q)
\end{cases}$

$z'_q = \begin{cases}
\mu(-x_q, y_q), & \text{ if } y_q= x_q \\
\lambda (-x_q, y_q), & \text{ otherwise } (y_q\neq \pm x_q \text{ or } y_q= -x_q)
\end{cases}$

and, for $n_q$ even,

$z_q = \begin{cases}
\mu (x_q, y_q), & \text{ if } y_q= -x_q , x_q \neq \pm A_{n_q} \\
\mu_0 (x_q, y_q), & \text{ if } y_q= -x_q , x_q = \pm A_{n_q} \\
\lambda (x_q, y_q), & \text{ otherwise } (y_q\neq \pm x_q \text{ or } y_q= x_q)
\end{cases}$

$z'_q = \begin{cases}
\mu (-x_q, y_q), & \text{ if } y_q = x_q , x_q \neq \pm A_{n_q} \\
\mu_0 (-x_q, y_q), & \text{ if } y_q = x_q , x_q = \pm A_{n_q} \\
\lambda (-x_q, y_q), & \text{ otherwise } (y_q \neq \pm x_q \text{ or } y_q = -x_q)
\end{cases}$

This map is continuous and well-defined on $Q_L \times \Pi_{q=2} ^r G_{j_q}$. Moreover, this space is saturated with respect to the relation (\ref{relation}) and we can check that the map is compatible with this relation.

For $i \in \{0,...,k\}$, we now define $$\psi_{(i , j_2 , \ldots , j_r)}: F_{i} \times \Pi_{q=2} ^r G_{j_q} \to P_{\bar n}^I$$
by setting $\psi_{(i , j_2 , \ldots , j_r)} \big\vert _{Q_L \times \Pi_{q=2} ^r G_{j_q}} = \psi_{(L , j_2 , \ldots , j_r)}$. Passing to the quotient, we get a continuous map 
\begin{displaymath}
\tilde{\psi}_{(i , j_2 , \ldots , j_r)}: (F_i \times \Pi_{q=2} ^r G_{j_q}) / \sim  \to P_{\bar n}^I.
\end{displaymath}
.

For $i \in \{ 0,1, \ldots , k \}$, $j_q \in \{0,1\}$ when $n_q$ is odd or $j_q \in \{0,1,2\}$ when $n_q$ is even ($2 \leq q \leq r$) we built 
\begin{center}
$W_s = \bigcup_{i+\sum_{q=2}^r j_q =s} U_i \times \prod_{q=2}^r  V_{j_q} \subset  (S^{n_1} \times S^{n_1}) \times (S^{n_2} \times S^{n_2}) \times \ldots \times (S^{n_r} \times S^{n_r}) 
$
\end{center}
where $s=0, \ldots ,k+ \sum_{q=2}^rTC(S^{n_q})=TC(P^{n_1}) + \sum_{q=2}^rTC(S^{n_q})$. Therefore, there exists a tower of open subsets
\begin{displaymath}
\emptyset = W_{-1} \subset W_0 \subset \ldots \subset W_{k + \sum_{q=2}^r TC(S^{n_q})} = (S^{n_1} \times S^{n_1}) \times (S^{n_2} \times S^{n_2}) \times \ldots \times (S^{n_r} \times S^{n_r}). 
\end{displaymath}
We have
\begin{displaymath}
W_{s} - W_{s-1} = \bigcup_{i+\sum_{q=2}^r j_q =s}(U_{i}-U_{i-1}) \times \prod_{q=2}^r (V_{j_q} - V_{j_q-1}) = \bigcup_{i+\sum_{q=2}^r j_q =s}F_{i} \times \prod_{q=2}^r G_{j_q}
\end{displaymath}
which is a topologically disjoint union. As already mentioned, all the subsets are saturated subsets of
$S^{n_1} \times S^{n_1} \times S^{n_2} \times S^{n_2} \times \ldots \times S^{n_r} \times S^{n_r} \cong S_{\bar n}\times S_{\bar n}$ 
with respect to the relation (\ref{relation}). In particular, the tower of $W_s$ provides the following tower of open subsets by passing to the quotient space, 
\begin{displaymath}
\emptyset = \tilde{W}_{-1} \subset \tilde{W}_0 \subset \ldots \subset \tilde{W}_{k+ \sum_{q=2}^r TC(S^{n_q})} \cong P_{\bar n} \times P_{\bar n}
\end{displaymath}
and 
\begin{displaymath}
\tilde{W}_{s} - \tilde{W}_{s-1}
% = \bigcup_{i+\sum_{q=2}^r j_q =l} \dfrac{U_{i}-U_{i-1} \times \prod_{q=2}^r V_{j_q} - V_{j_q-1}}{\sim} 
= \bigcup_{i+\sum_{q=2}^r j_q =s} \dfrac {F_i \times \prod_{q=2}^r G_{j_q}} {\sim}
\end{displaymath}
is a topologically disjoint union where $s=0, \ldots , k + \sum_{q=2}^rTC(S^{n_q})$. Assembling the maps $\tilde{\psi}_{(i , j_2 , \ldots , j_r)}$ we get a continuous motion planner on $\tilde{W}_{s} - \tilde{W}_{s-1}$ and, by Proposition \ref{Caracterization-TC}, we can conclude that 
$$\text{TC}(P_{\bar{n}}) \leq k+ \sum_{q=2}^{r}\text{TC}(S^{n_q})=\text{TC}(P^{n_1}) + \sum_{q=2}^{r}\text{TC}(S^{n_q}).$$
\end{proof}
\bibliographystyle{plain}

\end{document}